\documentclass[a4paper,10pt]{article}
\usepackage{mathtext}
\usepackage[T1,T2A]{fontenc}
\usepackage[cp1251]{inputenc}
\usepackage[english]{babel}
\usepackage{amsmath}
\usepackage{amsfonts}
\usepackage{amssymb}
\usepackage{mathrsfs}
\usepackage{amsthm}
\usepackage{enumerate}
\usepackage{graphicx}
\graphicspath{}
\DeclareGraphicsExtensions{.pdf,.png,.jpg}

\usepackage{color}
\usepackage{euscript}

\usepackage{cite}

\newtheorem{Le}{Lemma}[section]
\newtheorem{Def}{Definition}[section]
\newtheorem{St}[Le]{Proposition}
\newtheorem{Th}{Theorem}[section]
\newtheorem{Cor}[Le]{Corollary}
\newtheorem{Rem}[Le]{Remark}

\numberwithin{equation}{section}

\DeclareMathOperator{\diam}{diam}

\title{Besicovitch's covering theorem in the parabolic metric}
\author{Nikita Dobronravov \footnote{Supported by the Ministry of Science and Higher Education of the Russian Federation (agreement no. 075-15-2025-343).}}
\begin{document}
	\maketitle
	\begin{abstract}
		We prove that Besicovitch's covering theorem holds for the parabolic metric  $d_p((x_1,t_1),(x_2,t_2))=(|x_1-x_2|^{p}+|t_1-t_2|^{\frac{p}{2}})^{\frac{1}{p}}$ on $\mathbb{R}^n\times\mathbb{R}$ if and only if $p\geqslant 2$. If $p=2$, this answers a question posed by P. Mattila in the affirmative. 
	\end{abstract}
	
	\section{Introduction}
	
	Covering theorems play an important role in mathematical analysis. A prominent example is Besicovitch's covering theorem.
	\begin{Th}[Besicovitch's covering theorem]
		Let $n\in\mathbb{N}$. Then there exist constants $M_1(n),M_2(n)\in\mathbb{N}$ with the following property. Let $A$ be a bounded subset of $\mathbb{R}^n$ and let $\mathcal{B}$ be a family of closed Euclidean balls such that each point of $A$ is the center of a ball of $\mathcal{B}$. Then there is a subfamily $\mathcal{B}'\subset\mathcal{B}$ such that 
		\begin{equation}\label{s-BCP}
			A\subset\bigcup\mathcal{B}',
		\end{equation}
		and 
		\begin{equation}\label{BCP}
			\sum\limits_{B_r(x)\in\mathcal{B}'}\chi_{B_r(x)}(y)
			\leqslant
			M_1(n)
		\end{equation}
		for every $y\in\mathbb{R}^n$.
		Moreover, the family $\mathcal{B}'$ can be split into $M_2(n)$ disjoint subfamilies.
	\end{Th}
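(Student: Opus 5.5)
I would follow the classical Besicovitch selection argument, adapted to the Euclidean setting. First reduce to the case $\sup\{r: B_r(x)\in\mathcal{B}\}<\infty$. If the supremum is infinite, a single ball of $\mathcal{B}$ already covers the bounded set $A$, and the statement is trivial. Then choose balls inductively. Let $A_1=A$. Given $A_k\neq\emptyset$, pick $x_k\in A_k$ with a ball $B_k=B_{r_k}(x_k)\in\mathcal{B}$ such that $r_k>\frac{3}{4}\sup\{r: B_r(x)\in\mathcal{B},\,x\in A_k\}$, and set $A_{k+1}=A_k\setminus\bigcup_{j\le k}B_j$. If this stops after finitely many steps, let $\mathcal{B}'$ be the chosen balls. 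Two elementary properties follow. For $j>k$ we have $r_j\le\frac43 r_k$ and $x_j\notin B_k$. Consequently the shrunken balls $B_{r_k/3}(x_k)$ are pairwise disjoint. Since $A$ is bounded, volume comparison forces $r_k\to0$ if the process is infinite. Then every $a\in A$ is eventually chosen-covered: otherwise $a\in A_k$ for all $k$, some ball centred at $a$ has radius $r>0$, and this contradicts $r_k>\frac34 r$ for all $k$. This proves \eqref{s-BCP}.

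The heart of the proof is the key counting lemma: for each $k$, the number of indices $j<k$ with $B_j\cap B_k\ne\emptyset$ is bounded by a constant $L(n)$. I would split these $j$ into two groups.

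\emph{Large balls}, those with $r_j>Cr_k$ for a fixed large $C$. Here I use the geometric core. For two such indices $i<j<k$ with $x_k$ as apex, the angle at $x_k$ between $x_i-x_k$ and $x_j-x_k$ is bounded below by an absolute constant $\theta_0>0$. The inputs are $x_j\notin B_i$, $|x_i-x_k|\le r_i+r_k$, $r_j\le\frac43 r_i$ and $r_k$ small relative to $r_i,r_j$. The argument is planar, by contradiction: if the angle were small, the triangle inequality estimates would give $|x_i-x_j|\le r_i$. A sphere-packing count of directions on $S^{n-1}$ then bounds this group by a constant depending only on $n$.

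\emph{Comparable balls}, those with $r_j\le Cr_k$. Each $B_{r_j/3}(x_j)$ is disjoint from the others and has radius at least $r_k/4$, because $r_k\le\frac43 r_j$. All of them lie in $B_{(2C+1)r_k}(x_k)$. A volume comparison bounds this group by $(8C+4)^n$.

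I expect the angle estimate to be the main obstacle. It needs a careful case analysis on whether $|x_i-x_k|$ is close to $r_i$ or noticeably less, using the reverse triangle inequality along nearly collinear rays.

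Finally, derive both conclusions from the counting lemma. For \eqref{BCP}, fix $y$ and consider all balls of $\mathcal{B}'$ containing $y$, and let $B_k$ be the one with largest index among them. Every other such ball meets $B_k$. Hence the overlap is at most $L(n)+1=:M_1(n)$, and if infinitely many balls contained $y$ this bound would still apply to every finite subfamily. For the splitting, colour the balls greedily in the order of selection with $M_2(n)=L(n)+1$ colours. When $B_k$ arrives, at most $L(n)$ earlier balls meet it, so some colour is free. Each colour class then consists of pairwise disjoint balls.
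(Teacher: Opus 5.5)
Your proposal is correct, and it takes a somewhat different route from the paper on the large balls. The paper only cites this theorem, but its Section~3, together with the Euclidean proposition in the introduction, shows the proof it has in mind. Your argument is the Evans--Gariepy form of the classical proof. It shares that skeleton: selection of nearly maximal radii, disjoint shrunken balls, $r_k\to0$ giving the covering, a volume count for comparable radii, a bound on earlier balls meeting $B_k$ as in Lemma~\ref{stand}, and greedy colouring.

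The difference is how the large balls are counted. You prove pairwise angular separation at $x_k$ for \emph{all} earlier large balls meeting $B_k$. Your geometric lemma uses one-sided non-containment together with the radius comparison $r_j\le\frac43 r_i$ that the selection builds in.

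The paper instead sorts these balls into $M$-adic scale levels and bounds each level by volume (Proposition~\ref{coes}). It then takes one ball from every other level. The radii of these representatives differ by more than a factor $1+q$, so the larger one was selected first. Hence they form a genuine Besicovitch family with two-sided non-containment. The geometric input, the Euclidean proposition playing the role of Lemmas~\ref{>>},~\ref{><},~\ref{<>}, is then needed only for such families and involves no radius comparison.

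Your route is more direct and gives far better constants. The paper's loses a factor $K(n,q,M)$, but it confines the metric-specific geometry to statements about Besicovitch families. That is the form in which it can be checked for $d_p$, where Lemmas~\ref{>>},~\ref{><},~\ref{<>} bound the length of such families rather than giving an angle.

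Two small repairs are needed.
\begin{enumerate}
\item In the reduction, take the supremum only over balls centred in $A$. A huge ball centred far from $A$ need not cover it.
\item In the angle estimate you must also use $x_k\notin B_i$ and $x_k\notin B_j$. With $a=|x_i-x_k|$ and $b=|x_j-x_k|$, this means $r_i<a\le r_i+r_k$ and $r_j<b\le r_j+r_k$. Without the lower bound on $b$, $x_j$ could sit next to $x_k$ in the direction of $x_i$, making the angle zero.
\end{enumerate}

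Since $a>r_i$ always, the useful case split is $a\ge b$ versus $a<b$, not whether $a$ is below $r_i$. The estimate is then short. Take $r_k=1$, $C=3$ and angle $\theta\le\frac14$. Then $|x_i-x_j|\le|a-b|+\theta\min(a,b)$. If $a\ge b$, this is less than $r_i+1-r_j+\frac14(r_j+1)<r_i$. If $a<b$, it is less than $\frac13 r_i+1+\frac14(r_i+1)<r_i$, using $r_j\le\frac43 r_i$ and $r_i>3$. Either case contradicts $x_j\notin B_i$.
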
	
	Here, and in what follows, $B_r(x)$ is the closed ball with center $x$ and radius $r$.	By the notation $\cup\mathcal{B}$ we mean $\cup_{B\in\mathcal{B}}B$.
	
	There are many papers generalizing Besicovitch's theorem to the case of metric spaces other than Euclidean. 
	We recall the definitions of a Besicovitch family of balls and several versions of Besicovitch Covering Property for a metric space.
	\begin{Def}
		We say that a family $\mathcal{B}$ of balls in a metric space $(X, d)$ is a Besicovitch family of balls if, first, for every ball $B_r(x)\in\mathcal{B}$ we have $x\notin B_{\tau}(y)$ for all $B_{\tau}(y)\in\mathcal{B}$, $B_r(x)\neq B_{\tau}(y)$, and, second, $\cap_{B_r(x)\in\mathcal{B}}B_r(x)\neq\emptyset$. 
	\end{Def}
	\begin{Def}
		A metric space $(X,d)$ satisfies the strong Besicovitch Covering Property (s-BCP) if Besicovitch's covering theorem holds on it.
		
		A metric space $(X,d)$ satisfies the Besicovitch Covering Property (BCP) if a weaker version of the Besicovitch covering theorem holds, without the requirement that the family $\mathcal{B}'$ can be split into disjoint subfamilies.
		
		A metric space $(X,d)$ satisfies the weak Besicovitch Covering Property (w-BCP) if there exists a constant $M$ such that $\#\mathcal{B}\leqslant M$ for every Besicovitch family of balls.
	\end{Def}

	One of the most general results in this direction is the result of Le Donne and Rigot~\cite{DonRig2019}. They classified homogeneous groups that admit a homogeneous metric possessing BCP. They also proved (see~\cite[Theorem~1.6]{DonRig2017}) that BCP is not stable under a biLipschitz change of metric. Thus, the question of which homogeneous metrics satisfy BCP remains open.

	Let $1\leqslant p<\infty$ and let $d_p$ be the parabolic metric 
	\begin{equation}\label{pm}
		d_p((x_1,t_1),(x_2,t_2))
		=
		(|x_1-x_2|^p+|t_1-t_2|^{\frac{p}{2}})^{\frac{1}{p}}
	\end{equation}
	on $\mathbb{P}^n=\mathbb{R}^n\times\mathbb{R}$. Here, and in what follows, $|x|$ is the Euclidean norm of $x$. Note that there is a very similar homogeneous metric on the Heisenberg group $\mathbb{H}^n=\mathbb{C}^n\times\mathbb{R}$
	\begin{equation}
		d((z,t),(\omega,\tau))
		=
		(|z-\omega|^4+|t-\tau+2\operatorname{Im}(z_1\bar{\omega}_1+\cdots+z_n\bar{\omega}_n)|^2)^{\frac{1}{4}}
	\end{equation}
	which is often called the Koranyi metric. In the papers~\cite{KorRei} and~\cite{SawWhe} it was proved that the Koranyi metric on the Heisenberg group does not satisfy the BCP. The metric $d_4$ is also called the Koranyi metric on the parabolic space.
	In the paper~\cite{Mat} (see page~10, discussion before Lemma~4.7), P.~Mattila asks whether Besicovitch's theorem can be extended to the parabolic metric $d_2$. Itoh~\cite{Itoh2018} proved that $\mathbb{P}^n$ equipped with the metric $d_\infty((x_1,t_1),(x_2,t_2))=\max\{|x_1-x_2|,\sqrt{|t_1-t_2|}\}$ satisfies s-BCP.
	We provide an affirmative answer to Mattila's question:
	\begin{Th}\label{BP}
		Let $2\leqslant p<\infty$. Then $\mathbb{P}^n$ with the metric $d_p$ satisfies s-BCP.
	\end{Th}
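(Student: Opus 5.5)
We reduce s-BCP to a geometric finiteness statement for Besicovitch families. The standard route (Federer; Le Donne--Rigot) shows that in a doubling metric space s-BCP follows once every Besicovitch family has at most $M$ balls. Indeed, one runs Besicovitch's greedy selection: repeatedly pick a ball whose radius is at least half the supremum among balls centred at still-uncovered points. Bounded overlap then reduces to bounding families in which no centre lies in another ball, and the splitting into $M_2$ disjoint subfamilies follows from bounded overlap together with the doubling property (only boundedly many earlier selected balls of comparable or larger radius can meet a given one). The space $(\mathbb{P}^n,d_p)$ is doubling, being a homogeneous metric under the dilations $\delta_\lambda(x,t)=(\lambda x,\lambda^2 t)$. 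It is also translation invariant. So I would first record this reduction, citing the general theory, and then concentrate on proving w-BCP.

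To prove w-BCP I would mimic the Euclidean angle argument. Let $\{B_{r_i}(z_i)\}$ be a Besicovitch family. By translation we may assume the common point is the origin, so $d_p(z_i,0)\le r_i$ and $d_p(z_i,z_j)>\max(r_i,r_j)$ for $i\neq j$. The key lemma I would aim for is the following. Suppose $z_i,z_j$ satisfy $d_p(z_i,0)\le d_p(z_j,0)$ and $d_p(z_i,z_j)>d_p(z_j,0)$ (the latter follows from the Besicovitch conditions, since $d_p(z_j,0)\le r_j$). Then the normalized points $\delta_{1/\rho_i}z_i$ and $\delta_{1/\rho_j}z_j$, with $\rho=d_p(\cdot,0)$, lie at distance at least some $c(n,p)>0$ on the unit sphere $S=\{d_p(\cdot,0)=1\}$. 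Since $S$ is compact, at most $N(c)$ points can be pairwise $c$-separated, and the bound on $\#\mathcal{B}$ follows.

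The main obstacle is this separation lemma; it is exactly where $p\ge2$ must enter, and the statement fails for $p<2$. To prove it, I would argue by contradiction. Suppose $z_i=\delta_{s}w$ and $z_j=\delta_{1}w'$, with $s\le 1$, $w,w'\in S$, and $w'$ close to $w$. I would show $d_p(z_i,z_j)\le d_p(z_j,0)=1$. The first step is the collinear case $w'=w$, i.e. estimating the function
\[
f(s)=|x|^p(1-s)^p+|t|^{p/2}(1-s^2)^{p/2}
\]
against $|x|^p+|t|^{p/2}=1$. For $s\in(0,1)$ we have $(1-s^2)^{p/2}$ versus $1$, and the point is to get the strict inequality $f(s)<1$ uniformly in $s$ bounded away from $0$, plus a first-order margin near $s=1$. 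One then needs a first-order expansion in $w'-w$ showing that the defect $1-f(s)$ dominates the perturbation. The degenerate regimes are the dangerous ones: $s\to0$, where $z_i$ is near the origin; $s\to1$; and points near the "poles" $x=0$ or "equator" $t=0$ of $S$, where $d_p^p$ is not smooth. The condition $p\ge2$ makes $|t|^{p/2}$ at least $C^1$ with controlled behaviour near $t=0$, so that moving in $t$ costs only higher order. For $p<2$ the cusp $|t|^{p/2}$ near $t=0$ allows a counterexample chain, which is consistent with the theorem.

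For $s$ near $0$ I would instead use the triangle-like inequality $d_p(z_i,z_j)\le(\ldots)$ with a strictly convex comparison, possibly splitting into cases according to whether $|x|$ or $|t|^{1/2}$ dominates. If a uniform separation constant proves impossible in some regime, I would fall back on a case split by the ratio $r_i/r_j$. Comparable radii are handled by doubling, since boundedly many separated centres fit in a ball. For very different radii, the separation lemma only needs to hold for $s\le s_0$ small, which should be the easier asymptotic computation.
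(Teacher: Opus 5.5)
\textbf{The key separation lemma is false.} It fails for every $p\geqslant 2$, and it fails in the regime $s\to0$ that you expected to be the easy one.

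Fix a small $\eta>0$, a vector $x\in\mathbb{R}^n$ with $|x|=\eta$, and $t=(1-\eta^p)^{2/p}$. Then $w=(x,t)$ and $w'=(-x,t)$ both lie on $S$, and $d_p(w,w')=2\eta$. Put $z_j=w$ and $z_i=\delta_s w'=(-sx,s^2t)$ with $0<s<2\eta^p$. We use three facts: $(1+s)^p\geqslant 1+ps$; Bernoulli's inequality $(1-s^2)^{p/2}\geqslant 1-\frac p2 s^2$, which needs $\frac p2\geqslant1$; and $t^{p/2}=1-\eta^p$. Together they give
\[
d_p(z_i,z_j)^p=(1+s)^p\eta^p+(1-s^2)^{\frac p2}(1-\eta^p)\geqslant 1+ps\Big(\eta^p-\frac s2\Big)>1 .
\]
So $\{B^p_s(z_i),B^p_1(z_j)\}$ is a genuine Besicovitch pair containing $(0,0)$, with $\rho_i=s\leqslant\rho_j=1$. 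Yet its normalized centres are only $2\eta$ apart. Letting $\eta\to0$ shows that no uniform $c(n,p)>0$ exists, so the compactness count on $S$ yields nothing.

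The reason is structural. The point $\delta_s w'$ has $t$-coordinate of order $s^2$ but $x$-coordinate of order $s$. For small $s$, only the direction of the $x$-component in $\mathbb{R}^n$ matters, and near the poles of $S$ points with opposite $x$-directions are close. Your plan that the defect $1-f(s)$ dominates a first-order perturbation in $w'-w$ breaks down near the poles. There the defect is only of order $s|x|^p+s^2$, while changing the $x$-component by $\varepsilon$ changes $d_p^p$ by order $s|x|^{p-1}\varepsilon$. Your fallback ("for very different radii one only needs $s\leqslant s_0$") does not help, because the counterexample has $s$ arbitrarily small.

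\textbf{What is missing is a replacement for pairwise separation.} The paper normalizes the smaller ball to $B^p_1((0,0))$. It then classifies the much larger balls meeting it not by position on $S$, but by three things:
\begin{enumerate}
\item a sector condition on the $x$-components, $\langle x_i,x_j\rangle\geqslant\frac{99}{100}|x_i||x_j|$;
\item the sign of $t$;
\item absolute thresholds: $|x|,t\geqslant10$; or $|t|\leqslant10$; or $|x|\leqslant10$ and $|t|\geqslant500$.
\end{enumerate}
Even within one class, pairs are not excluded. Lemma~\ref{>>} only bounds a chain of scale-separated balls by $8$. It does so through the recursion $t_{j+1}\geqslant 2t_j-1$, which comes from $a^q-b^q\geqslant a^{q-1}(a-b)$ with $q=\frac p2\geqslant1$.

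This convexity of $u\mapsto u^{p/2}$ at large $t$, that is near the poles, is where $p\geqslant2$ enters, not smoothness of $|t|^{p/2}$ at $t=0$. Correspondingly, the paper's counterexample for $p<2$ has $|x_j|\to0$ and $t_j\to\infty$.

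\textbf{A second gap, in your reduction.} The paper's lemmas assume only that the large balls meet $B^p_1((0,0))$, not that they share a point with it. That is what bounds the number of earlier, much larger selected balls meeting a given one, and you need this bound for the splitting into disjoint subfamilies. Doubling controls only comparable radii at that step, and bounded overlap at a point does not apply because those large balls need not share a point. So your reduction of s-BCP to w-BCP is incomplete at exactly the step where the disjoint splitting is obtained.
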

	\begin{Rem}
		The constant in Theorem~\ref{BP} does not depend on $p$. The author does not have a limiting argument that allows one to deduce s-BCP for $d_\infty$ form Theorem 1.2. 
	\end{Rem}
	\begin{Th}\label{neBP}
		Let $1\leqslant p< 2$. Then $\mathbb{P}^n$ with the metric $d_p$ does not satisfy w-BCP.
	\end{Th}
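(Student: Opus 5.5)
The plan is to build, for each fixed $1\leqslant p<2$ and each $N\in\mathbb{N}$, an explicit Besicovitch family of $N$ balls in $\mathbb{P}^1$ whose balls all contain the origin. Since $\mathbb{P}^1$ embeds isometrically into $\mathbb{P}^n$ (put the extra space coordinates equal to $0$), this shows that no constant $M$ works in the definition of w-BCP, for any $n$.

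The heuristic comes from the shape of $B_r(c)$ near its lowest point. If $c=(0,T)$ with $T>0$ and $r=\sqrt T$, the origin is the bottom tip of $B_r(c)$. Near this tip the ball is approximately
\[
\bigl\{(y,s):\ s\geqslant \tfrac{2}{p}T^{1-\frac p2}|y|^p\bigr\}.
\]
Because $2-p>0$, the factor $T^{1-p/2}$ grows with $T$. So a point $(y,s)$ with $s>0$ and $|y|^p\gg T^{\frac p2-1}s$ lies outside a very large ball even though it is very close to the origin. The small centres can therefore be hidden outside the large balls.

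Concretely, I choose centres $c_k=(x_k,t_k)$ for $k=1,\dots,N$ with $x_k>0$, $t_k>0$, and radii $r_k=d_p(c_k,0)$. Then $0\in\bigcap_k B_{r_k}(c_k)$ automatically. The sequences are chosen inductively so that three conditions hold:
\begin{itemize}
\item the $x_k$ decrease very fast, say $x_k\leqslant \varepsilon x_{k-1}$;
\item the $t_k$ increase very fast;
\item $r_k>2r_j$ for $k>j$.
\end{itemize}
The condition $c_k\notin B_{r_j}(c_j)$ for $j<k$ then follows from the triangle inequality for $d_p$, which is a metric for $p\geqslant1$:
\[
d_p(c_k,c_j)\geqslant r_k-r_j>r_j .
\]

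The remaining condition, which is the main point, is $c_j\notin B_{r_k}(c_k)$ for $j<k$, i.e.
\[
|x_k-x_j|^p+|t_k-t_j|^{p/2}>|x_k|^p+t_k^{p/2}.
\]
Since $x_k\leqslant\varepsilon x_j$, the left $x$-term is at least $(1-\varepsilon)^px_j^p$, while $|x_k|^p\leqslant\varepsilon^px_j^p$. By the mean value theorem, for $t_j\ll t_k$,
\[
t_k^{p/2}-(t_k-t_j)^{p/2}\leqslant C_p\,t_k^{\frac p2-1}t_j .
\]
It therefore suffices that
\[
\bigl((1-\varepsilon)^p-\varepsilon^p\bigr)x_j^p> C_p\,t_k^{\frac p2-1}t_j ,
\]
and this holds once $t_k$ is chosen large enough given $x_j,t_j$ for all $j<k$, because $\frac p2-1<0$.

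In the induction step, $t_k$ is chosen last, after $x_k$. Taking it huge also secures $r_k>2r_j$. Since this step must also satisfy $t_j\ll t_k$ and $x_k\leqslant\varepsilon x_j$, it is important to check that the choices at step $k$ never disturb the conditions already secured for earlier indices. They do not: each condition involves $c_k$ only through quantities that are made large or small at step $k$.

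The step I expect to be the real obstacle is the sign issue in the $x$-direction. If the $x_k$ were comparable, the cross term $p|x_k|^{p-1}x_j$ would dominate $x_j^p$ and could have the wrong sign. Making $x_k$ much smaller than all earlier $x_j$ is what removes this problem.
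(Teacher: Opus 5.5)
Your proposal is correct and uses essentially the same idea as the paper: $t$ grows rapidly while $|x|$ decreases, and for $p<2$ the concavity of $s\mapsto s^{p/2}$ makes the time deficit $t_k^{p/2}-(t_k-t_j)^{p/2}\leqslant C_p\,t_k^{\frac p2-1}t_j$ tend to zero, so a fixed spatial gain keeps the earlier centres outside the later, larger balls. The paper instead fixes explicit choices, $r_j=(t_j+1)^{1/2}$ and $|x_j|^p=\frac p2(t_j+1)^{\frac p2-1}$, to get one infinite sequence and proves $d_p(c_i,c_j)>r_j>r_i$ in a single estimate; you take $r_k=d_p(c_k,0)$, build arbitrarily long finite families inductively, and use the triangle inequality for the other direction, a step that is actually redundant because your main estimate together with $r_k>r_j$ already gives $d_p(c_j,c_k)>r_k>r_j$.
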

	\begin{Rem}
		Theorem~\ref{neBP} holds for $0<p<1$ as well, but $d_p$ is only a quasi-metric in this case.
	\end{Rem}
	The proof of Theorem~\ref{BP} will be similar to the classical proof of Besicovitch's covering theorem in the Euclidean space. The only difference is that the proofs of Lemmas~\ref{>>},~\ref{><},~\ref{<>} below are based on the properties of the metric $d_p$.
	An analogue of these three lemmas for Euclidean space can be formulated as follows.
	\begin{St}
		Let the sequence of balls $\{B_{r_j}(x_j)\}_{j\in J}$ be such that $r_j>10$, $0\notin B_{r_j}(x_j)$, $B_1(0)\cap B_{r_j}(x_j)\neq\emptyset$ and $x_j\notin B_{r_i}(x_i)$ for $i\neq j$. Then $\langle x_i,x_j\rangle\leqslant \frac{9}{10}|x_i||x_j|$ for $i\neq j$.
	\end{St}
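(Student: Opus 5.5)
The plan is a direct computation: first turn the hypotheses into bounds on the norms $|x_j|$, then expand $|x_i-x_j|^2$ using the non-containment condition for the ball with the \emph{larger} center norm. No earlier result is needed beyond these elementary facts about Euclidean balls.

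\emph{Step 1: size of the centers.} Since $0\notin B_{r_j}(x_j)$, we have $|x_j|>r_j>10$. Since $B_1(0)\cap B_{r_j}(x_j)\neq\emptyset$, the triangle inequality gives $|x_j|\leqslant r_j+1$. Hence
\begin{equation*}
r_j\geqslant |x_j|-1 \quad\text{and}\quad |x_j|>10 \qquad\text{for every } j .
\end{equation*}

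\emph{Step 2: separation.} Fix $i\neq j$. The roles of $i$ and $j$ are symmetric, so assume $|x_i|\leqslant |x_j|$. The hypothesis $x_i\notin B_{r_j}(x_j)$ gives $|x_i-x_j|>r_j$. Since $r_j\geqslant|x_j|-1>0$, we may square to obtain
\begin{equation*}
|x_i|^2+|x_j|^2-2\langle x_i,x_j\rangle>(|x_j|-1)^2 = |x_j|^2-2|x_j|+1 ,
\end{equation*}
which rearranges to
\begin{equation*}
2\langle x_i,x_j\rangle<|x_i|^2+2|x_j|-1 .
\end{equation*}

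\emph{Step 3: bounding the right-hand side.} By the ordering, $|x_i|^2\leqslant |x_i||x_j|$. By Step 1, $|x_i|>10$, so $2|x_j|<\tfrac15|x_i||x_j|$. Therefore
\begin{equation*}
2\langle x_i,x_j\rangle<\tfrac65|x_i||x_j| ,
\end{equation*}
that is, $\langle x_i,x_j\rangle<\tfrac35|x_i||x_j|\leqslant\tfrac9{10}|x_i||x_j|$.

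The only real choice in the argument is in Step 2: we must use the ball whose center has the larger norm. This makes the quadratic term $|x_j|^2$ cancel, and the leftover $|x_i|^2$ is controlled by $|x_i||x_j|$. There is no genuine obstacle; the stated constant $\tfrac9{10}$ has room to spare, and the argument actually gives $\tfrac35$.
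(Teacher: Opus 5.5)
Your proof is correct, and it is essentially the argument the paper has in mind. The paper states this proposition without a separate proof, but your computation is the Euclidean core of the check $\frac{198}{100}-1-\frac{1}{5}>0$ in the proofs of Lemmas~\ref{>>} and~\ref{<>}: order the centers by norm, apply $x_i\notin B_{r_j}(x_j)$ to the center with the larger norm together with $r_j\geqslant|x_j|-1$, and absorb $|x_i|^2+2|x_j|$ into $\frac{6}{5}|x_i||x_j|$. Your constant $\frac{3}{5}$ is also valid.
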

	Here and in what follows, $\langle x,y\rangle$ is the Euclidean scalar product of the vectors $x$ and $y$.
	
	To prove Theorem~\ref{neBP}, we construct an infinite sequence of balls such that $0$ belongs to all of them, and no ball in the sequence contains the center of any other ball.
	
	{\bf Acknowledgment.}
	I am grateful to my scientific adviser D. M. Stolyarov for statement of the problem and advice. I am also grateful to P. Mattila for attention to my work and inspiring questions.

	\section{New part of the proof}
	
	\begin{Def}
		For $p\geqslant 1$, let $B^p_r((x,t))$ be the closed ball in the metric $d_p$ on $\mathbb{P}^n$, centred at $(x,t)$ with radius $r$.
	\end{Def}
	
	\begin{St}\label{nerav}
		Let $(x,t)\in \mathbb{P}^n$. Then
		\begin{equation}
			d_p(B_1^p((0,0)),(x,t))^p
			\geqslant
			\max\{|x|-1,0\}^p+\max\{|t|-1,0\}^{\frac{p}{2}}.
		\end{equation}
	\end{St}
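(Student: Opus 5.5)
The plan is to bound $d_p(y,(x,t))^p$ from below by the right-hand side for every point $y=(y_1,s)\in B_1^p((0,0))$, and then take the infimum over $y$. Here $d_p(B_1^p((0,0)),(x,t))$ means the distance from the set to the point, i.e.\ $\inf_{y\in B_1^p((0,0))} d_p(y,(x,t))$. The infimum is attained because the ball is compact, but a pointwise bound suffices anyway.

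First I unpack membership in the ball. If $(y_1,s)\in B_1^p((0,0))$, then $|y_1|^p+|s|^{p/2}\leqslant 1$. In particular $|y_1|\leqslant 1$ and $|s|\leqslant 1$, since each summand is at most $1$.

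Next I bound the two coordinates separately. By the triangle inequality in $\mathbb{R}^n$,
\begin{equation*}
|x-y_1|\geqslant |x|-|y_1|\geqslant |x|-1 .
\end{equation*}
Since $|x-y_1|\geqslant 0$ as well, this gives $|x-y_1|\geqslant \max\{|x|-1,0\}$. In the same way, $|t-s|\geqslant \max\{|t|-1,0\}$. The functions $u\mapsto u^p$ and $u\mapsto u^{p/2}$ are monotone on $[0,\infty)$, so raising these inequalities to the corresponding powers preserves them. Adding the results yields
\begin{equation*}
d_p((y_1,s),(x,t))^p=|x-y_1|^p+|t-s|^{p/2}\geqslant \max\{|x|-1,0\}^p+\max\{|t|-1,0\}^{p/2}.
\end{equation*}

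Finally, taking the infimum over $(y_1,s)$ in the ball gives the claim. There is no real obstacle in this argument. The only point needing care is that the estimate is deliberately crude: it uses only $|y_1|\leqslant1$ and $|s|\leqslant1$, not the joint constraint $|y_1|^p+|s|^{p/2}\leqslant 1$. That coarser information is exactly what the stated bound requires.
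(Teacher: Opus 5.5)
Your proof is correct and is essentially the paper's argument. The paper picks a point $(a,b)$ of the ball realizing the distance and applies the coordinatewise bounds $|x-a|\geqslant\max\{|x|-1,0\}$ and $|t-b|\geqslant\max\{|t|-1,0\}$, which it leaves implicit; you spell these bounds out for every point of the ball and take the infimum, which avoids needing the minimum to be attained.
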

	\begin{proof}
		Let $(a,b)\in B_1^p((0,0))$ be such that $d_p(B_1^p((0,0)),(x,t))=d_p((a,b),(x,t))$. Then
		\begin{equation}
			d_p(B_1^p((0,0)),(x,t))^p
			=
			|x-a|^p+|t-b|^{\frac{p}{2}}
			\geqslant
			\max\{|x|-1,0\}^p+\max\{|t|-1,0\}^{\frac{p}{2}}.
		\end{equation}
	\end{proof}
	\begin{St}\label{stn}
		Let $q\geqslant 1$ and $0\leqslant b\leqslant a$. Then 
		\begin{equation}
			a^{q}-b^q
			\geqslant
			a^{q-1}(a-b).
		\end{equation}
	\end{St}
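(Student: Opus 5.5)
The plan is to compare $a^q-b^q$ with $a^{q-1}(a-b)$ by expanding the difference directly, exploiting monotonicity of the power function on $[0,\infty)$. First I dispose of the degenerate case $a=0$: then $b=0$ as well and both sides vanish (with the convention $0^{0}=1$ when $q=1$, the right-hand side is $1\cdot 0=0$), so the inequality holds trivially. From now on assume $a>0$.

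The key step is the identity
\begin{equation*}
a^q-b^q-a^{q-1}(a-b)=a^{q-1}b-b^q=b\,(a^{q-1}-b^{q-1}).
\end{equation*}
Since $q-1\geqslant 0$, the map $s\mapsto s^{q-1}$ is nondecreasing on $[0,\infty)$, so $0\leqslant b\leqslant a$ gives $a^{q-1}\geqslant b^{q-1}$. Together with $b\geqslant 0$ this makes the right-hand side nonnegative, which is exactly the claimed inequality. In the boundary case $q=1$ both sides of the statement equal $a-b$, consistent with the identity.

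As a cross-check, the same bound follows from the mean value theorem: $a^q-b^q=q\xi^{q-1}(a-b)$ for some $\xi\in[b,a]$. However, this yields $q\xi^{q-1}(a-b)$ with $\xi$ possibly much smaller than $a$, so it does not directly give the factor $a^{q-1}$ without extra work. The factorization above avoids this issue. There is no real obstacle here; the only point needing care is the $a=0$ and $q=1$ convention for $0^0$, which I handle separately as described.
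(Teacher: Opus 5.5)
Your proof is correct and is essentially the paper's argument. The paper writes $a^q-b^q=a^q\bigl(1-(b/a)^q\bigr)$ and uses $(b/a)^q\leqslant b/a$ for $b/a\in[0,1]$, which is the same inequality $b^q\leqslant a^{q-1}b$ that you obtain by factoring the difference as $b(a^{q-1}-b^{q-1})\geqslant 0$; your separate treatment of $a=0$ is a small gain, since the paper divides by $a$ without comment.
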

	\begin{proof}
		We can write
		\begin{equation}
			a^{q}-b^q
			=
			a^q\Big(1-\Big(1-\frac{a-b}{a}\Big)^q\Big)
			\geqslant
			a^q\Big(1-\Big(1-\frac{a-b}{a}\Big)\Big)
			=
			a^{q-1}(a-b).
		\end{equation}
	\end{proof}
	\begin{Le}\label{>>}
		Let $p\geqslant 2$. Let also $\mathcal{A}=\{B^p_{r_j}((x_j,t_j))\}$ be a sequence of balls in $\mathbb{P}^n$ that satisfies the following requirements:
		\begin{enumerate}
			\item For every $j$, $t_j\geqslant 10$, $|x_j|\geqslant10$, and $B^p_{r_j}((x_j,t_j))\cap B^p_{1}((0,0))\neq\emptyset$.
			
			\item If $i\neq j$, then $(x_j,t_j)\notin B_{r_i}^p((x_i,t_i))$.
			
			\item For every $i$ and $j$ the inequality $\langle x_j,x_i\rangle\geqslant\frac{99}{100}|x_i||x_j|$ is true.
		\end{enumerate}
		Then $\mathcal{A}$ has length at most $8$.
	\end{Le}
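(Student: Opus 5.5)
We first show that no two balls of $\mathcal{A}$ can be ordered the same way in both coordinates. After that, the remaining configurations are forced into a chain in which $|x_j|$ increases while $t_j$ decreases. Proposition~\ref{stn}, with exponent $q=p/2\geqslant1$ (this is where $p\geqslant2$ enters), then makes such a chain grow so fast that it cannot be long.

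\emph{Step 1: lower bound on radii.} Since $B^p_{r_j}((x_j,t_j))$ meets $B^p_1((0,0))$, we have $r_j\geqslant d_p(B^p_1((0,0)),(x_j,t_j))$. Proposition~\ref{nerav} together with $|x_j|,t_j\geqslant 10$ then gives
\[
r_j^p\geqslant (|x_j|-1)^p+(t_j-1)^{p/2}.
\]
Requirement~2 gives, for $i\neq j$,
\[
|x_i-x_j|^p+|t_i-t_j|^{p/2}>\max\{r_i^p,r_j^p\}.
\]

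\emph{Step 2: no domination.} Suppose $|x_i|\leqslant|x_j|$ and $t_i\leqslant t_j$. Requirement~3 gives
\[
|x_j-x_i|^2\leqslant|x_j|^2-\tfrac{198}{100}|x_i||x_j|+|x_i|^2\leqslant|x_j|^2-\tfrac{98}{100}|x_i||x_j|.
\]
Since $|x_i|\geqslant10$, this is at most $|x_j|^2-2|x_j|+1=(|x_j|-1)^2$. Likewise $0\leqslant t_j-t_i\leqslant t_j-1$. Hence $d_p((x_i,t_i),(x_j,t_j))^p\leqslant r_j^p$, which contradicts requirement~2. So after ordering the balls by $|x_j|$ (ties are excluded by the same argument), $t_j$ is strictly decreasing.

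\emph{Step 3: quantitative growth along the chain.} Take $i<j$ in this order, so $|x_i|<|x_j|$ and $t_i>t_j$.

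\begin{enumerate}
\item Using $d_p>r_j$ gives
\[
(t_i-t_j)^{p/2}-(t_j-1)^{p/2}>(|x_j|-1)^p-|x_j-x_i|^p .
\]
By Proposition~\ref{stn} with $q=p$ and the bound on $|x_j-x_i|$ from Step~2, the right-hand side is at least of order $|x_j|^{p-1}\,|x_i|$. Thus $t_i$ must exceed $2t_j$ by a definite amount.
\item Using $d_p>r_i$ gives
\[
|x_j-x_i|^p-(|x_i|-1)^p>(t_i-1)^{p/2}-(t_i-t_j)^{p/2}\geqslant (t_i-1)^{p/2-1}(t_j-1).
\]
The last inequality is Proposition~\ref{stn} with $q=p/2\geqslant1$. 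So $|x_j-x_i|$, and hence $|x_j|$, must be much larger than $|x_i|$.
\end{enumerate}

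Combining these two inequalities along consecutive elements of the chain, I expect a doubling-type recursion: roughly $|x_{k+1}|\geqslant 2|x_k|$ and $t_k\geqslant 2t_{k+1}$. This recursion should be incompatible with inequality~(1) applied to the first and last elements once the chain has $9$ members. The reason is that the angular error term $0.02|x_1||x_9|$ in $|x_9-x_1|^2$ is eventually dominated, while $t_1-t_9<t_1$.

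\emph{Main obstacle.} The hard step is extracting the exact constant $8$: turning inequalities~(1) and~(2) into a clean recursion whose iteration fails at length $9$, uniformly in $p\geqslant2$. I would first try normalising $u_k=|x_k|^p$ and $v_k=t_k^{p/2}$, so that both inequalities become statements about differences of $p$-th powers. Then I would show that each $v_k$ is at least a fixed multiple of the sum of all later $v$'s, and check numerically that $9$ terms force a contradiction with the first-versus-last comparison.
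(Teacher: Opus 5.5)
Your Steps 1 and 2 are correct, and the two inequalities in Step 3 are exactly what the paper works with. The paper orders by decreasing $|x_j|$, i.e.\ your chain reversed, and extracts from your inequality~1 the growth $t_k\geqslant 2t_{k+1}-1\geqslant\frac32 t_{k+1}$ (in your indexing). The gap is the endgame, and it is not only a matter of constants: the contradiction you propose cannot occur. A two-sided doubling recursion says nothing about how the time scale $\sqrt{t}$ compares with the space scale $|x|$. Inequality~1 for the first and last balls is only a lower bound on $t_1$, and for $p=2$ no requirement bounds $t_1$ from above: raising $t_1$ and $r_1^2$ by the same amount preserves all three requirements.

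Concretely, take $p=2$, $x_k$ on one ray with $|x_k|=10\cdot 2^k$, and $t_k=10\cdot 2^{18-k}$, $k=1,\dots,9$. This satisfies your recursion exactly. In inequality~1 for $(i,j)=(1,9)$ the left side is $(t_1-t_9)-(t_9-1)=1300481$, while the right side is $(|x_9|-1)^2-|x_9-x_1|^2=194161$. So neither the first-versus-last comparison nor the normalisation plan in your last paragraph can give the bound $8$.

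What does fail in this example is inequality~2 for the pair $(1,2)$, which would require $39>655359$. The missing idea is that the two \emph{ends} of the chain give opposite comparisons between $\sqrt{t}$ and $|x|$. The paper argues as follows, in your indexing.
\begin{enumerate}
\item Inequality~1 for the pair $(8,9)$ has left side $<t_8^{p/2}$. By Proposition~\ref{stn} with $q=\frac p2$, its right side is $\geqslant\frac{7}{10}(\frac{9}{10})^{p-2}|x_9|^{p-1}|x_8|$. Hence $\sqrt{t_8}\geqslant\frac{8}{10}|x_8|$.
\item Six steps of $t_k\geqslant\frac32t_{k+1}$, plus only the monotonicity $|x_2|\leqslant|x_8|$ (your growth of $|x|$ is not needed), give $\sqrt{t_2}\geqslant(\frac32)^3\cdot\frac{8}{10}|x_8|\geqslant(\frac32)^2|x_2|$.
\item Inequality~2 for the pair $(1,2)$ gives $(t_1-1)^{p/2}-(t_1-t_2)^{p/2}<|x_2-x_1|^p\leqslant(2|x_2|)^p\leqslant(\frac89)^{p/2}t_2^{p/2}$.
\item The map $s\mapsto(s-1)^{p/2}-(s-t_2)^{p/2}$ is non-decreasing for $p\geqslant2$, and $t_1\geqslant\frac32t_2$, $t_2\geqslant10$. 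This yields $(\frac75)^{p/2}\leqslant(\frac12)^{p/2}+(\frac89)^{p/2}\leqslant(\frac{25}{18})^{p/2}$, which is false.
\end{enumerate}
One pair at each end plus six growth steps is precisely what the count $9$ pays for.
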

	\begin{proof}
		Assume the contrary, let $\#\mathcal{A}\geqslant 9$. We may assume that $|x_j|\geqslant|x_{j+1}|$ for all $j$.
		
		First, we use 
		$B^p_{r_j}((x_j,t_j))\cap B^p_{1}((0,0))\neq\emptyset$: 
		\begin{equation}\label{B10dp}
			r_j^p
			\geqslant 
			d_p(B^p_1((0,0)),(x_j,t_j))^p
			\overset{\scriptscriptstyle{\text{Prop.~\ref{nerav}}}}{\geqslant}
			(|x_j|-1)^p+(t_j-1)^{\frac{p}{2}}.
		\end{equation}
		Since $(x_{j+1},t_{j+1})\notin B^p_{r_j}((x_j,t_j))$, we have
		\begin{equation}\label{main}
			\begin{aligned}
				r_j^p
				&\leqslant
				d_p((x_j,t_j),(x_{j+1},t_{j+1}))^p
				=
				|x_j-x_{j+1}|^p +|t_{j}-t_{j+1}|^{\frac{p}{2}}\\
				&=
				(|x_j|^2+|x_{j+1}|^2-2\langle x_j,x_{j+1}\rangle)^{\frac{p}{2}}+|t_{j}-t_{j+1}|^{\frac{p}{2}}\\
				&\leqslant
				\Big(|x_j|^2+|x_{j+1}|^2-\frac{198}{100}|x_j||x_{j+1}|\Big)^{\frac{p}{2}}+|t_{j}-t_{j+1}|^{\frac{p}{2}}.
			\end{aligned}
		\end{equation}
		Hence,
		\begin{equation}\label{pn}
			(|x_j|-1)^p+(t_j-1)^{\frac{p}{2}}
			\leqslant
			\Big(|x_j|^2+|x_{j+1}|^2-\frac{198}{100}|x_j||x_{j+1}|\Big)^{\frac{p}{2}}+|t_{j}-t_{j+1}|^{\frac{p}{2}}.
		\end{equation}
		We want to use Proposition~\ref{stn} for $q=\frac{p}{2}$, $a=(|x_j|-1)^2$, and $b=|x_j|^2+|x_{j+1}|^2-\frac{198}{100}|x_j||x_{j+1}|$.
		We need to check that $b\leqslant a$:
		\begin{equation}
			\begin{aligned}
				a-b
				&=
				\frac{198}{100}|x_j||x_{j+1}|-|x_{j+1}|^2-2|x_j|+1\\
				&=
				|x_j||x_{j+1}|\Big(\frac{198}{100}-\frac{|x_{j+1}|}{|x_j|}-\frac{2}{|x_{j+1}|}+\frac{1}{|x_j||x_{j+1}|}\Big)
				\geqslant
				|x_j||x_{j+1}|\Big(\frac{198}{100}-1-\frac{1}{5}\Big)
				>
				0.
			\end{aligned}
		\end{equation}
		So,
		\begin{equation}\label{pny}
			\begin{aligned}
				&(|x_j|-1)^p-\Big(|x_j|^2+|x_{j+1}|^2-\frac{198}{100}|x_j||x_{j+1}|\Big)^{\frac{p}{2}}\\
				&\!\!\!\!\!\!\!\!\overset{\scriptscriptstyle{\text{Prop.~\ref{stn}}}}{\geqslant}
				(|x_j|-1)^{p-2}\Big(\frac{198}{100}|x_j||x_{j+1}|-|x_{j+1}|^2-2|x_j|+1\Big)\\
				&\geqslant
				(|x_j|-1)^{p-2}\Big(\frac{98}{100}|x_j||x_{j+1}|-2|x_j|+1\Big)
				\geqslant
				\frac{7}{10}(|x_j|-1)^{p-2}|x_j||x_{j+1}|\\
				&\geqslant
				\frac{7}{10} \Big(\frac{9}{10}\Big)^{p-2}|x_j|^{p-1}|x_{j+1}|.
			\end{aligned}
		\end{equation}
		Combining inequalities~\eqref{pn} and~\eqref{pny}, we get, in particular, that
		\begin{equation}\label{pn2}
			0
			<
			|t_{j+1}-t_{j}|^{\frac{p}{2}}-(t_j-1)^{\frac{p}{2}}.
		\end{equation}
		Therefore,
		\begin{equation}\label{2t+1}
			t_{j+1}
			\geqslant
			2t_j-1
			\geqslant
			\frac{3}{2}t_j.
		\end{equation}
		We return to inequalities~\eqref{pn} and~\eqref{pny}, which for $j=1$ imply
		\begin{equation}
			\frac{7}{10}\cdot \Big(\frac{9}{10}\Big)^{p-2}|x_1|^{p-1}|x_{2}|
			\leqslant
			|t_{2}-t_{1}|^{\frac{p}{2}}-(t_1-1)^{\frac{p}{2}}
			<
			t_2^{\frac{p}{2}}.
		\end{equation}
		Thus,
		\begin{equation}
			t_2^{\frac{1}{2}}
			\geqslant
			\Big(\frac{7}{10}\Big)^{\frac{1}{p}}\cdot \Big(\frac{9}{10}\Big)^{\frac{p-2}{p}}|x_1|^{\frac{p-1}{p}}|x_2|^{\frac{1}{p}}
			\geqslant
			\frac{8}{10}|x_2|.
		\end{equation}
		Hence,
		\begin{equation}\label{3/2}
			t_8^{\frac{1}{2}}
			\overset{\scriptscriptstyle{\eqref{2t+1}}}{\geqslant}
			\Big(\frac{3}{2}\Big)^3t_2^{\frac{1}{2}}
			\geqslant
			\Big(\frac{3}{2}\Big)^3\cdot\frac{8}{10}|x_2|
			\geqslant
			\Big(\frac{3}{2}\Big)^2|x_8|.
		\end{equation}
		Since $(x_{8},t_8)\notin B^p_{r_9}((x_9,t_9))$, we have
		\begin{equation}
			(|x_9|-1)^{p}+(t_9-1)^{\frac{p}{2}}
			\overset{\scriptscriptstyle{\text{Prop.~\ref{nerav}}}}{\leqslant}
			d_p((x_8,t_8),(x_9,t_9))^p
			=
			|x_9-x_8|^p+(t_9-t_8)^{\frac{p}{2}}.
		\end{equation}
		So,
		\begin{equation}
			(t_9-1)^{\frac{p}{2}}-(t_9-t_8)^{\frac{p}{2}}
			\leqslant
			|x_9-x_8|^p
			\leqslant
			2^p|x_8|^p
			\overset{\scriptscriptstyle{\eqref{3/2}}}{\leqslant}
			2^p\Big(\frac{2}{3}\Big)^{2p}t_8^{\frac{p}{2}}
			=
			\Big(\frac{8}{9}\Big)^{p}t_8^{\frac{p}{2}}
			\leqslant
			\Big(\frac{8}{9}\Big)^{\frac{p}{2}}t_8^{\frac{p}{2}}.
		\end{equation}
		Since $\frac{p}{2}\geqslant 1$ and $t_8>1$, the function $(\cdot-1)^{\frac{p}{2}}-(\cdot-t_8)^{\frac{p}{2}}$ is non-decreasing on $[t_8,\infty)$. 
		Hence by inequality~\eqref{2t+1}
		\begin{equation}
			\Big(\frac{3}{2}t_8-1\Big)^{\frac{p}{2}}-\Big(\frac{3}{2}t_8-t_8\Big)^{\frac{p}{2}}
			\leqslant
			(t_9-1)^{\frac{p}{2}}-(t_9-t_8)^{\frac{p}{2}}
			\leqslant
			\Big(\frac{8}{9}\Big)^{\frac{p}{2}}t_8^{\frac{p}{2}}.
		\end{equation}
		Therefore,
		\begin{equation}
			\Big(\frac{3}{2}-\frac{1}{t_8}\Big)^{\frac{p}{2}}-\Big(\frac{1}{2}\Big)^{\frac{p}{2}}
			\leqslant
			\Big(\frac{8}{9}\Big)^{\frac{p}{2}}.
		\end{equation}
		Thus,
		\begin{equation}
			\Big(\frac{3}{2}-\frac{1}{10}\Big)^{\frac{p}{2}}
			\leqslant
			\Big(\frac{1}{2}\Big)^{\frac{p}{2}}+\Big(\frac{8}{9}\Big)^{\frac{p}{2}}
			\leqslant
			\Big(\frac{1}{2}+\frac{8}{9}\Big)^{\frac{p}{2}}.
		\end{equation}
		This is a contradiction.
	\end{proof}
	\begin{Le}\label{><}
		Let $p\geqslant 2$. Let also $\mathcal{A}=\{B^p_{r_j}((x_j,t_j))\}$ be a sequence of balls in $\mathbb{P}^n$ that satisfies the following requirements:
		\begin{enumerate}
			\item For every $j$, $t_j\geqslant 500$, $|x_j|\leqslant10$, and $B^p_{r_j}((x_j,t_j))\cap B^p_{1}((0,0))\neq\emptyset$.
			
			\item If $i\neq j$, then $(x_j,t_j)\notin B_{r_i}^p((x_i,t_i))$.
		\end{enumerate}
		Then $\mathcal{A}$ has length at most $1$.
	\end{Le}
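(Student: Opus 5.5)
Suppose, for contradiction, that $\mathcal{A}$ contains two distinct balls $B^p_{r_1}((x_1,t_1))$ and $B^p_{r_2}((x_2,t_2))$. Relabelling if necessary, I assume $t_1\leqslant t_2$. The plan is to play a lower bound for $r_2$ against an upper bound for it, as in the proof of Lemma~\ref{>>}. Since $|x_j|\leqslant 10$, the spatial part of the distance between the two centres is small. So the contradiction has to come entirely from the time coordinate.

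\emph{Lower bound.} The ball $B^p_{r_2}((x_2,t_2))$ meets $B^p_1((0,0))$, so Proposition~\ref{nerav} gives
\begin{equation*}
r_2^p\geqslant d_p(B^p_1((0,0)),(x_2,t_2))^p\geqslant (t_2-1)^{\frac p2}.
\end{equation*}
Here I simply drop the nonnegative spatial term.

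\emph{Upper bound.} The point $(x_1,t_1)$ does not lie in the closed ball $B^p_{r_2}((x_2,t_2))$. Using $|x_1-x_2|\leqslant 20$, this gives
\begin{equation*}
r_2^p< |x_1-x_2|^p+(t_2-t_1)^{\frac p2}\leqslant 20^p+(t_2-t_1)^{\frac p2}.
\end{equation*}
Combining the two bounds,
\begin{equation*}
(t_2-1)^{\frac p2}-(t_2-t_1)^{\frac p2}<20^p=400^{\frac p2}.
\end{equation*}

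\emph{Contradiction.} Apply Proposition~\ref{stn} with $q=\frac p2\geqslant 1$, $a=t_2-1$ and $b=t_2-t_1$. Its hypothesis $0\leqslant b\leqslant a$ holds because $t_1\geqslant 1$ and $t_1\leqslant t_2$. It yields
\begin{equation*}
(t_2-1)^{\frac p2}-(t_2-t_1)^{\frac p2}\geqslant (t_2-1)^{\frac p2-1}(t_1-1).
\end{equation*}
Since $t_2\geqslant t_1\geqslant 500$ and $\frac p2-1\geqslant 0$, the right-hand side is at least $(t_1-1)^{\frac p2}\geqslant 499^{\frac p2}$. But $499^{\frac p2}>400^{\frac p2}$, which contradicts the previous display. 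Hence $\#\mathcal{A}\leqslant 1$.

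The only delicate step is the last one. One must exploit $t_1\geqslant 500$ through the factor $(t_1-1)$ rather than through $t_2$. The thresholds $500$ and $10$ are chosen exactly so that $499>20^2$, which is what makes the final comparison work.
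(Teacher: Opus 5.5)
Your proposal is correct and follows the paper's proof: the paper labels the balls so that $t_1\geqslant t_2$ and uses $(x_2,t_2)\notin B^p_{r_1}((x_1,t_1))$. You swap the labels, which changes nothing. The argument is otherwise identical: combine Proposition~\ref{nerav} with the non-containment, apply Proposition~\ref{stn} with $q=\frac p2$ to obtain a lower bound $\geqslant 499^{\frac p2}$, and compare it with the bound $|x_1-x_2|^p\leqslant 20^p=400^{\frac p2}$.
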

	\begin{proof}
		Assume the contrary, let $\#\mathcal{A}\geqslant 2$. We also may assume that $t_1\geqslant t_2$. Since $(x_2,t_2)\notin B_{r_1}^p((x_1,t_1))$, we have
		\begin{equation}
			(t_1-1)^{\frac{p}{2}}
			\overset{\scriptscriptstyle{\text{Prop.~\ref{nerav}}}}{\leqslant}
			r_1^p
			\leqslant
			(t_1-t_2)^{\frac{p}{2}}+|x_1-x_2|^p.
		\end{equation}
		Therefore,
		\begin{equation}
			499^{\frac{p}{2}}
			\leqslant
			(t_1-1)^{\frac{p}{2}-1}(t_2-1)
			\overset{\scriptscriptstyle{\text{Prop.~\ref{stn}}}}{\leqslant}
			(t_1-1)^{\frac{p}{2}}-(t_1-t_2)^{\frac{p}{2}}
			\leqslant
			|x_1-x_2|^p
			\leqslant
			20^p.
		\end{equation}
		This is a contradiction.
	\end{proof}
	\begin{Le}\label{<>}
		Let $p\geqslant 2$. Let also $\mathcal{A}=\{B^p_{r_j}((x_j,t_j))\}$ be a sequence of balls in $\mathbb{P}^n$ that satisfies the following requirements:
		\begin{enumerate}
			\item For every $j$, $|t_j|\leqslant 10$, $|x_j|\geqslant10$, and $B^p_{r_j}((x_j,t_j))\cap B^p_{1}((0,0))\neq\emptyset$.
			
			\item If $i\neq j$, then $(x_j,t_j)\notin B_{r_i}^p((x_i,t_i))$.
			
			\item For every $i$ and $j$ the inequality $\langle x_j,x_i\rangle\geqslant\frac{99}{100}|x_i||x_j|$ is true.
		\end{enumerate}
		Then $\mathcal{A}$ has length at most $1$. 
	\end{Le}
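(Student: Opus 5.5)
We argue by contradiction, along the lines of the first half of the proof of Lemma~\ref{>>}. Suppose $\#\mathcal{A}\geqslant 2$, take two balls of $\mathcal{A}$, and label them so that $|x_1|\geqslant|x_2|$. In this regime the $t$-coordinates are bounded, so the $t$-part of the metric can contribute at most a bounded amount. Meanwhile $|x_j|\geqslant 10$ and the angle condition together force a large gain in the $x$-part. The contradiction is obtained by comparing these two.

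\emph{Lower bound on $r_1$.} Since $B^p_{r_1}((x_1,t_1))\cap B^p_1((0,0))\neq\emptyset$, Proposition~\ref{nerav} gives
\[
r_1^p\geqslant d_p(B^p_1((0,0)),(x_1,t_1))^p\geqslant (|x_1|-1)^p .
\]
Here we simply drop the nonnegative $t$-term.

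\emph{Upper bound on $r_1$.} Since $(x_2,t_2)\notin B^p_{r_1}((x_1,t_1))$, the angle condition $\langle x_1,x_2\rangle\geqslant\frac{99}{100}|x_1||x_2|$ yields, exactly as in \eqref{main},
\[
r_1^p\leqslant\Big(|x_1|^2+|x_2|^2-\tfrac{198}{100}|x_1||x_2|\Big)^{\frac p2}+|t_1-t_2|^{\frac p2}
\leqslant\Big(|x_1|^2+|x_2|^2-\tfrac{198}{100}|x_1||x_2|\Big)^{\frac p2}+20^{\frac p2},
\]
using $|t_1|,|t_2|\leqslant 10$.

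\emph{The $x$-gain.} Next we apply Proposition~\ref{stn} with $q=\frac p2$, $a=(|x_1|-1)^2$ and $b=|x_1|^2+|x_2|^2-\frac{198}{100}|x_1||x_2|$. The verification that $0\leqslant b\leqslant a$ and the ensuing chain \eqref{pny} from the proof of Lemma~\ref{>>} use only $|x_1|\geqslant|x_2|\geqslant 10$ and the angle condition, so they carry over verbatim. They give
\[
(|x_1|-1)^p-\Big(|x_1|^2+|x_2|^2-\tfrac{198}{100}|x_1||x_2|\Big)^{\frac p2}\geqslant\tfrac{7}{10}\Big(\tfrac{9}{10}\Big)^{p-2}|x_1|^{p-1}|x_2|\geqslant \tfrac{7}{10}\Big(\tfrac{9}{10}\Big)^{p-2}10^p .
\]

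\emph{Conclusion.} Combining the three displays, we get
\[
\tfrac{7}{10}\Big(\tfrac{9}{10}\Big)^{p-2}10^p\leqslant 20^{\frac p2},
\]
that is,
\[
\tfrac{7}{10}\cdot\tfrac{100}{81}\cdot 9^p\leqslant (\sqrt{20})^p .
\]
This is false for every $p\geqslant 2$: the left-hand side is at least $0.86\cdot 9^p$, and $9/\sqrt{20}>2$. For instance, at $p=2$ it reads $70\leqslant 20$, and the ratio of the two sides only grows with $p$. This contradiction shows that $\mathcal{A}$ has length at most $1$.

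The only step needing care is reusing \eqref{pny}, namely the check $b\leqslant a$ and the bound $\frac{98}{100}|x_1||x_2|-2|x_1|+1\geqslant\frac{7}{10}|x_1||x_2|$. Both depend only on $|x_2|\geqslant 10$ and $|x_1|\geqslant|x_2|$, which hold here. Everything else is a direct numerical comparison.
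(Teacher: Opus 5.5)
Your proof is correct and is essentially the paper's own argument. The paper also bounds $(|x_1|-1)^p$ from below via Proposition~\ref{nerav}, uses the non-containment together with the angle condition and Proposition~\ref{stn} to get $70\cdot 9^{p-2}\leqslant|t_1-t_2|^{p/2}\leqslant 20^{p/2}$, and reaches the same contradiction $\frac{70}{81}\leqslant(\frac{20}{81})^{p/2}$.
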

	\begin{proof}
		Assume the contrary, let $\#\mathcal{A}\geqslant 2$. We also may assume that $|x_1|\geqslant |x_2|$. Since $(x_2,t_2)\notin B_{r_1}^p((x_1,t_1))$, we have
		\begin{equation}
			(|x_1|-1)^p
			\overset{\scriptscriptstyle{\text{Prop.~\ref{nerav}}}}{\leqslant}
			r_1^p
			\leqslant
			|t_1-t_2|^{\frac{p}{2}}+|x_1-x_2|^p.
		\end{equation}
		Therefore,
		\begin{equation}
			\begin{aligned}
				9^{p-2}\cdot70
				&\leqslant
				(|x_1|-1)^{p-2}\Big(\frac{198}{100}|x_1||x_2|-|x_2|^2-2|x_1|+1\Big)\\
				&\overset{\scriptscriptstyle{\text{Prop.~\ref{stn}}}}{\leqslant}
				(|x_1|-1)^p-|x_1-x_2|^p
				\leqslant
				|t_1-t_2|^{\frac{p}{2}}
				\leqslant 20^{\frac{p}{2}}.
			\end{aligned}
		\end{equation}
		Hence,
		\begin{equation}
			\frac{70}{81}
			\leqslant
			\Big(\frac{20}{81}\Big)^{\frac{p}{2}}
			\leqslant 
			\frac{20}{81}.
		\end{equation}
		This is a contradiction.
	\end{proof}
	
	\section{Standard part of the proof}
	\begin{Def}
		Let $q>0$ and $p\geqslant 2$. The sequence of balls $\{B^p_{r_j}((x_j,t_j))\}$ in $\mathbb{P}^n$ is called a $q$-Besicovitch sequence if it satisfies the following condition: If $j<i$, then $r_i\leqslant(1+q)r_j$ and $(x_i,t_i)\notin B^p_{r_j}((x_j,t_j))$.
	\end{Def}
	\begin{Le}\label{stand}
		Let $n\in\mathbb{N}$, $q>0$, and $p\geqslant 2$. There exists a constant $C(n,q)\in\mathbb{N}$ with the following property. For every $q$-Besicovitch sequence $\mathcal{A}=\{B^p_{r_j}((x_j,t_j))\}$ and every $j$ the inequality
		\begin{equation}
			\#\big\{i\mid i<j\ \text{and }\ B^p_{r_i}((x_i,t_i))\cap B^p_{r_j}((x_j,t_j))\neq\emptyset\big\}\leqslant C(n,q)
		\end{equation}
		is true.
	\end{Le}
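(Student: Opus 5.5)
Fix $j$ and let $I=\{i<j : B^p_{r_i}((x_i,t_i))\cap B^p_{r_j}((x_j,t_j))\neq\emptyset\}$. The plan is the classical one: split $I$ into balls much larger than $B_j$ and balls comparable to $B_j$. Handle the comparable ones by a packing argument, and the large ones by Lemmas~\ref{>>}, \ref{><}, \ref{<>} after rescaling.

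\textbf{Normalization.} The metric $d_p$ is translation invariant and homogeneous under the parabolic dilations $\delta_\lambda(x,t)=(\lambda x,\lambda^2 t)$: we have $d_p(\delta_\lambda u,\delta_\lambda v)=\lambda d_p(u,v)$. So we may assume $(x_j,t_j)=(0,0)$ and $r_j=1$. For $i\in I$, the definition of a $q$-Besicovitch sequence gives $r_i\geqslant 1$, because $(0,0)\notin B^p_{r_i}((x_i,t_i))$ while the two balls intersect. Hence $r_i\leqslant d_p((x_i,t_i),0)\leqslant r_i+1$. Also, for $i<k<j$ in $I$ we have $r_k\leqslant(1+q)r_i$ and $(x_k,t_k)\notin B^p_{r_i}((x_i,t_i))$.

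\textbf{Comparable balls.} Consider $I_1=\{i\in I: r_i\leqslant K\}$ for a large absolute constant $K$ (say $K=10^4$). Their centres lie in $B^p_{K+1}(0)$. For $i<k$ in $I_1$ we have $d_p(c_i,c_k)>r_i\geqslant 1$, so the centres are $1$-separated. The space $(\mathbb{P}^n,d_p)$ is doubling: the Lebesgue measure of $B^p_r$ equals $c_{n,p}r^{n+2}$. Moreover, $d_p$ is comparable to $d_\infty$ up to a factor $2$ uniformly in $p$, so a volume or covering argument with $d_\infty$-cubes gives $\#I_1\leqslant C(n,K)$, uniformly in $p\geqslant2$.

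\textbf{Large balls.} For $i\in I_2=I\setminus I_1$, each centre satisfies $d_p(c_i,0)\geqslant r_i>K$. Apply the reflections $x\mapsto -x$ and $t\mapsto -t$, which are isometries fixing $0$, and split $\mathbb{R}^n$ into $N(n)$ cones of angle where $\langle x,y\rangle\geqslant\frac{99}{100}|x||y|$. With these we partition $I_2$ into $O(N(n))$ classes of three types:
\begin{itemize}
\item $|x_i|\geqslant10$, $t_i\geqslant10$, same cone;
\item $|x_i|\leqslant10$, $t_i\geqslant 500$;
\item $|x_i|\geqslant 10$, $|t_i|\leqslant10$, same cone.
\end{itemize}
No other case occurs, since $d_p(c_i,0)>K$ forces $|x_i|\geqslant10$ or $|t_i|\geqslant500$. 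Within each class, condition 2 of the Lemmas must hold in both directions, but the sequence only gives it for $i<k$. For $k>i$ we need $c_i\notin B_{r_k}(c_k)$, which is not automatic.

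\textbf{Main obstacle.} The balls in Lemmas~\ref{>>}--\ref{<>} intersect $B_1(0)$ with unrestricted radii, so what is needed is mutual non-containment of centres. My plan is to further split each class by radius scale, using $r_k\leqslant(1+q)r_i$. If the radii within a subclass differ by a factor at most $1+q$, then $d_p(c_i,c_k)>r_i\geqslant r_k/(1+q)$. After a dilation by $1+q$ of the small ball (rescale so that $B_j$ has radius $1+q$), this becomes a genuine mutual exclusion for the enlarged balls $B_{(1+q)^{-1}\cdot}$. The cleaner route I would try first is the standard trick: replace $r_i$ by $r_i'=r_i/(1+q)$ only for the later index. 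Note, however, that $r_k/(1+q)\leqslant r_i$ and $d_p(c_i,c_k)>r_i$ already give $c_i\notin B^p_{r_k/(1+q)}(c_k)$.

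So the concrete step is this. Replace all radii in $I_2$ by $\rho_i=r_i/(1+q)$ and rescale by $\delta_{1+q}$ so that $B_j$ becomes $B_{1+q}(0)$. The sequence $\{B_{\rho_i}\}$ is then mutually centre-free, but it intersects $B_{1+q}(0)$ rather than $B_1(0)$. Rescaling back to $B_1(0)$ multiplies the thresholds $10,500$ by $1+q$, so choosing $K\gg(1+q)\cdot500$ keeps the centres in the Lemmas' regions. Each class therefore has at most $8$ elements, and $C(n,q)=C(n,K(q))+8\,O(N(n))$.
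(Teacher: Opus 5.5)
\textbf{Verdict: there is a genuine gap in the large-ball step.}

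\textbf{A minor slip in the comparable-ball step.} The facts $0\notin B^p_{r_i}(c_i)$ and $B^p_{r_i}(c_i)\cap B^p_1((0,0))\neq\emptyset$ do not give $r_i\geqslant 1$. What you actually have is $r_i\geqslant r_j/(1+q)=\frac{1}{1+q}$, from the $q$-Besicovitch condition with $i<j$. So the centres are only $\frac{1}{1+q}$-separated, and the packing constant depends on $q$ too. This is harmless.

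\textbf{Why shrinking the radii fails.} Replacing $r_i$ by $\rho_i=r_i/(1+q)$ does make the family mutually centre-free. But it destroys the other hypothesis of Lemmas \ref{>>}, \ref{><}, \ref{<>}. Since $d_p(c_i,0)>r_i$, every $y\in B^p_{\rho_i}(c_i)$ satisfies
$d_p(y,0)>r_i-\rho_i=\frac{q}{1+q}r_i$.
So the shrunk balls move away from the origin and do not meet $B^p_{1+q}((0,0))$ or any other fixed ball. No choice of $K$ and no rescaling repairs this.

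The Lemmas genuinely need this hypothesis. Their proofs use Proposition \ref{nerav} in the form $r_i^p\geqslant(|x_i|-1)^p+(|t_i|-1)^{p/2}$, i.e.\ the radius is almost the distance to the origin. With radius only a fixed fraction of that distance they are false. For example, the balls $B^p_{\frac25 2^k}((2^ke_1,0))$, $k\geqslant 4$, with $e_1$ the first basis vector, are pairwise centre-free, lie in one cone with $|t|\leqslant 10$, and there are infinitely many of them.

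\textbf{The per-class count of $8$ is also wrong.} A single cone/region class can contain many balls of comparable radius that are centre-free in one direction only. Such balls have to be counted by packing, not by the Lemmas.

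\textbf{The missing idea: separate scales instead of shrinking radii.}
\begin{enumerate}
\item Split the large balls further by $M^l\leqslant r_i<M^{l+1}$, with $M>1+q$. The paper takes $M=\max(1000,2+q)$.
\item Let $i<k$ lie in scales with $|l_i-l_k|\geqslant 2$. The condition $r_k\leqslant (1+q)r_i$ rules out $l_k\geqslant l_i+2$, so $l_k\leqslant l_i-2$. Hence $r_k<r_i<d_p(c_i,c_k)$.
\item Therefore the \emph{original} balls, which still meet $B^p_1((0,0))$, are mutually centre-free.
\item In each class, choose one ball from every other nonempty scale and apply the Lemmas. This bounds the number of nonempty scales in the class: by $16$ for the type with $|x_i|\geqslant 10$, $t_i\geqslant 10$, and by $2$ for each of the other two types.
\item Proposition \ref{coes} bounds the number of balls within a single scale by $K(n,q,M)$.
\end{enumerate}
This yields $C(n,q)\leqslant K(n,q,M)\,(34F(n)+6)$, where $F(n)$ is the number of cones in the paper's decomposition. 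It replaces your bound of $8$ per class.
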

	\begin{Cor}\label{end}
		A $q$-Besicovitch sequence can be divided into $C(n,q)+1$ disjoint subsequences.
	\end{Cor}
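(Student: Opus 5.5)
The corollary is a greedy colouring argument driven by Lemma~\ref{stand}. We treat the $q$-Besicovitch sequence $\mathcal{A}=\{B^p_{r_j}((x_j,t_j))\}_{j\geqslant 1}$ in the order of its indices. We colour the balls one at a time with colours from $\{1,\dots,C(n,q)+1\}$, so that any two balls of the same colour are disjoint.

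The construction is inductive. Suppose the balls $B^p_{r_i}((x_i,t_i))$ with $i<j$ have already been coloured, and no two of them with a common colour intersect. Consider the set
\begin{equation*}
I_j=\big\{i\mid i<j\ \text{and}\ B^p_{r_i}((x_i,t_i))\cap B^p_{r_j}((x_j,t_j))\neq\emptyset\big\}.
\end{equation*}
By Lemma~\ref{stand}, $\#I_j\leqslant C(n,q)$. Hence the balls indexed by $I_j$ use at most $C(n,q)$ of the $C(n,q)+1$ colours, and at least one colour is free. We give $B^p_{r_j}((x_j,t_j))$ the smallest free colour.

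This choice keeps the colouring valid. If some earlier ball $B^p_{r_i}((x_i,t_i))$ with $i<j$ has the same colour as $B^p_{r_j}((x_j,t_j))$, then $i\notin I_j$, so the two balls are disjoint. The step is well defined for every $j$ because Lemma~\ref{stand} holds uniformly in $j$.

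Let the $k$-th subsequence consist of all balls of colour $k$, for $k=1,\dots,C(n,q)+1$. Take any two balls of the same colour, with indices $i<j$. When $j$ was coloured, $i$ was earlier, so the argument above shows that the two balls are disjoint. Thus each subsequence is a family of pairwise disjoint balls. The subsequences partition $\mathcal{A}$, which proves the claim.

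No step is a real obstacle once Lemma~\ref{stand} is available. The only points to check are these:
\begin{enumerate}
\item The induction runs along the indexing of $\mathcal{A}$. This works whether $\mathcal{A}$ is finite or countably infinite, since each index has only finitely many predecessors.
\item Lemma~\ref{stand} counts only predecessors $i<j$. This is exactly the information the greedy step uses: when ball $j$ is coloured, only the earlier balls have colours.
\end{enumerate}
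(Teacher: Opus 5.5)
Your greedy colouring argument is correct and is the standard deduction the paper has in mind. The paper states Corollary~\ref{end} without proof, as an immediate consequence of Lemma~\ref{stand}: give each ball, in index order, a colour not used by its at most $C(n,q)$ intersecting predecessors.
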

	To prove Lemma~\ref{stand}, we need some additional statements.
	\begin{St}\label{distq}
		Let $q>0$, $p\geqslant 2$, and $\mathcal{A}=\{B^p_{r_j}((x_j,t_j))\}$ be a $q$-Besicovitch sequence. Then $\{B^p_{\frac{r_j}{3+q}}((x_j,t_j))\}$ is disjoint. 
	\end{St}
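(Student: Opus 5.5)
The plan is to argue by contradiction, using only the triangle inequality for $d_p$ and the two defining properties of a $q$-Besicovitch sequence. For $p\geqslant 1$, $d_p$ is a genuine metric: it is the $\ell^p$ combination of the metrics $|x_1-x_2|$ and $|t_1-t_2|^{1/2}$, and Minkowski's inequality gives the triangle inequality. Take two indices $j<i$ and suppose the shrunken balls $B^p_{r_j/(3+q)}((x_j,t_j))$ and $B^p_{r_i/(3+q)}((x_i,t_i))$ share a point $z$.

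First, the triangle inequality through $z$ gives
\begin{equation*}
d_p((x_i,t_i),(x_j,t_j))\leqslant \frac{r_i+r_j}{3+q}.
\end{equation*}
Second, the Besicovitch property for $j<i$ gives $r_i\leqslant (1+q)r_j$. Substituting,
\begin{equation*}
d_p((x_i,t_i),(x_j,t_j))\leqslant \frac{(1+q)r_j+r_j}{3+q}=\frac{2+q}{3+q}\,r_j<r_j,
\end{equation*}
where the strict inequality needs $r_j>0$, which we take for granted for nondegenerate balls. Hence $(x_i,t_i)\in B^p_{r_j}((x_j,t_j))$.

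This contradicts the other defining property of a $q$-Besicovitch sequence, namely $(x_i,t_i)\notin B^p_{r_j}((x_j,t_j))$ for $j<i$. So the shrunken balls are pairwise disjoint.

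The only points needing care are the metric (triangle) property of $d_p$ and the case $r_j=0$; neither is a real obstacle. The constant $3+q$ leaves some room, since $2+q$ already gives a non-strict version of the bound.
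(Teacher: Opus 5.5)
Your proof is correct and is the paper's argument in contrapositive form: the paper chains $d_p((x_j,t_j),(x_i,t_i))\geqslant r_j\geqslant\frac{r_j+r_i}{2+q}>\frac{r_j+r_i}{3+q}$ and leaves the triangle-inequality step implicit, while you make it explicit. Your caveat about $r_j>0$ is unnecessary, since the balls are closed and $d_p\leqslant\frac{2+q}{3+q}r_j\leqslant r_j$ already puts $(x_i,t_i)$ in $B^p_{r_j}((x_j,t_j))$, giving the contradiction.
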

	\begin{proof}
		Let $j<i$. The condition $(x_i,t_i)\notin B^p_{r_j}((x_j,t_j))$ implies
		\begin{equation}
			d_p((x_j,t_j),(x_i,t_i))
			\geqslant
			r_j.
		\end{equation}
		Therefore,
		\begin{equation}
			d_p((x_j,t_j),(x_i,t_i))
			\geqslant
			r_j
			\geqslant
			\frac{r_j+r_i}{2+q}
			>
			\frac{r_j+r_i}{3+q}.
		\end{equation}
	\end{proof}
	\begin{Cor}\label{s0}
		Let $q>0$ and let $\mathcal{A}=\{B^p_{r_j}((x_j,t_j))\}_{j=1}^{\infty}$ be a $q$-Besicovitch sequence. Let $A\subset\mathbb{P}^n$ be a bounded set and $(x_j,t_j)\in A$ for all $j$. Then
		\begin{equation}
			\lim_{j\rightarrow\infty}r_j=0.
		\end{equation}
	\end{Cor}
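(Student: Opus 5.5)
The plan is a volume-counting argument based on Proposition~\ref{distq}. We argue by contradiction. Since $\mathcal{A}$ is an infinite $q$-Besicovitch sequence, Proposition~\ref{distq} gives that the balls $B^p_{r_j/(3+q)}((x_j,t_j))$, $j\in\mathbb{N}$, are pairwise disjoint. Suppose $\lim_{j\to\infty} r_j=0$ fails. Then there exist $\varepsilon>0$ and an infinite set of indices $J$ with $r_j\geqslant\varepsilon$ for all $j\in J$.

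Next we place the disjoint balls inside one bounded region. Because $A$ is bounded, it lies in some ball $B^p_R((0,0))$. For $j\in J$ we shrink the disjoint balls further and use $B^p_{\varepsilon/(3+q)}((x_j,t_j))\subset B^p_{r_j/(3+q)}((x_j,t_j))$. The smaller balls are still pairwise disjoint, and by the triangle inequality for $d_p$ (which is a metric, since $p\geqslant 1$) they all lie in $B^p_{R+\varepsilon}((0,0))$.

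The contradiction comes from Lebesgue measure $\mathcal{L}^{n+1}$ on $\mathbb{P}^n$. Write $\rho=\varepsilon/(3+q)$. The metric $d_p$ is translation invariant and homogeneous under the dilation $\delta_\lambda(x,t)=(\lambda x,\lambda^2 t)$. Hence
\[
\mathcal{L}^{n+1}\big(B^p_\rho((x,t))\big)=\rho^{n+2}\,\mathcal{L}^{n+1}\big(B^p_1((0,0))\big).
\]
This measure is positive and finite, because $B^p_1((0,0))$ contains and is contained in product sets of positive finite measure. Infinitely many disjoint sets of equal positive measure cannot fit into the bounded set $B^p_{R+\varepsilon}((0,0))$, which has finite measure. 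This contradiction shows $r_j\to 0$.

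There is no real obstacle here. The only point to state explicitly is the scaling and positivity of the measure of $d_p$-balls. Alternatively, one can use a purely metric packing argument: $d_p$ is comparable to $d_\infty$, and bounded sets in $(\mathbb{P}^n,d_\infty)$ are totally bounded, so they contain no infinite $\rho$-separated family. Either route works, and I would use the measure argument.
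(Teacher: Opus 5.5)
Your proof is correct and follows the paper's argument: Proposition~\ref{distq} makes the shrunken balls $B^p_{r_j/(3+q)}((x_j,t_j))$ pairwise disjoint, and a volume bound inside a fixed bounded ball then gives the contradiction. The only difference is minor: the paper places the balls in $B^p_{2r_1+\diam(A)}((x_1,t_1))$, deduces $\inf r_j=0$, and upgrades this to $\lim r_j=0$ using $r_i\leqslant(1+q)r_j$ for $j<i$, whereas you pass directly to the indices with $r_j\geqslant\varepsilon$ and shrink to a uniform radius, so you never need that upgrade step.
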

	\begin{proof}
		The balls $B^p_{\frac{r_j}{3+q}}((x_j,t_j))$ are disjoint and $B^p_{\frac{r_j}{3+q}}((x_j,t_j))\subset B^p_{2r_1+\diam(A)}((x_1,t_1))$. Therefore, $\inf r_j=0$, so $\lim r_j=0$.
	\end{proof}
	\begin{St}\label{coes}
		Assume that $n\in\mathbb{N}$, $q>0$, and $M>1$. Then there exists a constant $K(n,q,M)\in\mathbb{N}$ with the following property. For every $q$-Besicovitch sequence $\mathcal{A}=\{B^p_{r_j}((x_j,t_j))\}$, $j\in\mathbb{N}$ and $l\in\mathbb{Z}$, the inequality
		\begin{equation}\label{car}
			\#\big\{i\mid i<j,\ M^lr_j\leqslant r_i<M^{l+1}r_j\ \text{and }\ B^p_{r_i}((x_i,t_i))\cap B^p_{r_j}((x_j,t_j))\neq\emptyset\big\}\leqslant K(n,q,M)
		\end{equation}
		is true.
	\end{St}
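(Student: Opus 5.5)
We prove Proposition~\ref{coes} by a volume-counting argument based on Proposition~\ref{distq} and the scaling behaviour of $d_p$-balls. Fix $j$ and $l$, and let $I$ be the set of indices $i<j$ with $M^lr_j\leqslant r_i<M^{l+1}r_j$ and $B^p_{r_i}((x_i,t_i))\cap B^p_{r_j}((x_j,t_j))\neq\emptyset$. The key observation is that the condition $i<j$ in a $q$-Besicovitch sequence forces $r_j\leqslant(1+q)r_i$. Hence only finitely many values of $l$ are relevant at all: $I$ is empty unless $M^{l+1}>(1+q)^{-1}$, i.e.\ unless $l\geqslant l_0(q,M)$. Consequently, for the nonempty cases all the radii $r_i$, $i\in I$, are comparable to $\rho:=M^l r_j$, with
\[
\rho\leqslant r_i<M\rho,\qquad r_j\leqslant \min\{(1+q)M\rho,\ \rho M^{-l}\}\leqslant (1+q)M\rho .
\]

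\textbf{Confining the centres.} For $i\in I$ the two balls intersect, so by the triangle inequality
\[
d_p((x_i,t_i),(x_j,t_j))\leqslant r_i+r_j\leqslant M\rho+(1+q)M\rho .
\]
Therefore each small ball $B^p_{r_i/(3+q)}((x_i,t_i))$ lies inside $B^p_{R}((x_j,t_j))$, where $R=(2+q)M\rho+M\rho/(3+q)$. By Proposition~\ref{distq} these small balls are pairwise disjoint, and each has radius at least $\rho/(3+q)$.

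\textbf{Counting by measure.} Let $|\cdot|$ denote Lebesgue measure on $\mathbb{R}^{n+1}$. The dilations $(x,t)\mapsto(\lambda x,\lambda^2 t)$ and all translations are compatible with $d_p$: translations are isometries, and the dilation multiplies $d_p$ by $\lambda$. Hence
\[
|B^p_r((x,t))|=r^{n+2}\,|B^p_1((0,0))| ,
\]
and $0<|B^p_1((0,0))|<\infty$. Comparing the total measure of the disjoint small balls with the measure of the ball containing them gives
\[
\#I\cdot\Big(\frac{\rho}{3+q}\Big)^{n+2}\leqslant R^{n+2},
\qquad\text{so}\qquad
\#I\leqslant \big((3+q)((2+q)M+1)\big)^{n+2}.
\]
The factor $|B^p_1((0,0))|$ cancels from both sides. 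This bound depends only on $n,q,M$, and we take it as $K(n,q,M)$.

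\textbf{Main point to check.} The step requiring care is the comparison $r_j\lesssim\rho$. It relies on the Besicovitch condition $r_j\leqslant(1+q)r_i$ for $i<j$, which bounds $r_j$ by a multiple of $\rho$ uniformly in $l$. Without this comparison the enclosing radius $R$ would not be controlled by $\rho$, and the counting bound would depend on $l$. Note also that the volume scaling uses the homogeneous dimension $n+2$, not $n+1$. Since the cancellation removes $|B^p_1((0,0))|$ entirely, the constant does not depend on $p$.
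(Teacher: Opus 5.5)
Your argument is correct and is essentially the paper's proof. The paper also uses $r_j\leqslant(1+q)r_i$ to bound $l$ from below. Note that $l$ is bounded only from below, not restricted to finitely many values as you first say, but this is harmless because your bound is uniform in $l$. The paper then places the disjoint shrunken balls of Proposition~\ref{distq} inside $B^p_{2(1+M^{l+1})r_j}((x_j,t_j))$ and compares measures that scale like $r^{n+2}$. The differences are cosmetic: the paper uses $\mathcal{H}_{n+2}$ where you use Lebesgue measure with the dilation $(x,t)\mapsto(\lambda x,\lambda^2 t)$, and it normalizes by $r_j$ rather than by $\rho=M^lr_j$.

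There is one arithmetic slip. In fact $(3+q)R/\rho=M\bigl((2+q)(3+q)+1\bigr)$, not $(3+q)\bigl((2+q)M+1\bigr)$. So your computation actually yields $K(n,q,M)=\bigl(M((2+q)(3+q)+1)\bigr)^{n+2}$, and your stated value is not justified when $M>3+q$. This does not affect the conclusion.
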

	\begin{proof}
		Let $\mathcal{C}_l$ be the set whose cardinality we wish to bound in~\eqref{car}. The inequality $r_j\leqslant (1+q)r_i$ implies that for $l<-\big\lceil\frac{\log(1+q)}{\log M}\big\rceil-1$ we have $\mathcal{C}_l=\emptyset$. Let $l\geqslant-\big\lceil\frac{\log(1+q)}{\log M}\big\rceil-1$. Here and in what follows, the notation $\lceil x\rceil$ means the ceiling of $x$.
		 If $i\in\mathcal{C}_l$, then
		\begin{equation}
			B^p_{\frac{r_i}{3+q}}((x_i,t_i))\subset B^p_{2(1+M^{l+1})r_j}((x_j,t_j)).
		\end{equation} 
		By Proposition~\ref{distq} 
		\begin{equation}
			\begin{aligned}
				(2(1+M^{l+1}))^{n+2}\mathcal{H}_{n+2}(B^p_{r_j}((x_j,t_j)))
				&=
				\mathcal{H}_{n+2}(B^p_{2(1+M^{l+1})r_j}((x_j,t_j)))\\
				&\geqslant
				\sum_{i\in\mathcal{C}_l} \mathcal{H}_{n+2}(B^p_{\frac{r_i}{3+q}}((x_i,t_i)))\\
				&\geqslant 
				(\#\mathcal{C}_l)M^{l(n+2)}(3+q)^{-(n+2)}\mathcal{H}_{n+2}(B^p_{r_j}((x_j,t_j))).
			\end{aligned}
		\end{equation}
		Here $\mathcal{H}_{n+2}$ denotes the Hausdorff measure of dimension $n+2$ related to the metric $d_p$. Notice that the Hausdorff dimension of $\mathbb{P}^n$ is $n+2$.
		So,
		\begin{equation}
			\begin{aligned}
				\#C_l
				&\leqslant
				\frac{(2(1+M^{l+1}))^{n+2}(3+q)^{n+2}}{M^{l(n+2)}}
				=
				2^{n+2}(3+q)^{n+2}\Big(M^{-l}+M\Big)^{n+2}\\
				&\leqslant
				2^{n+2}(3+q)^{n+2}\Big(M^{\lceil\frac{\log(1+q)}{\log M}\rceil+1}+M\Big)^{n+2}.
			\end{aligned}
		\end{equation}
	\end{proof}
	
	\begin{proof}[Proof of Lemma~\ref{stand}]
		By homogeneity of $\mathbb{P}^n$, we may assume that $B^p_{r_j}((x_j,t_j))=B^p_1((0,0))$. There exists a constant $F(n)\in\mathbb{N}$ such that  
		\begin{equation}
			\mathcal{A}=\bigcup_{m=1}^{F(n)}\mathcal{A}_m
		\end{equation}
		and for every $m$ and all $B^p_{r_{i_1}}((x_{i_1},t_{i_1})),B^p_{r_{i_2}}((x_{i_2},t_{i_2}))\in\mathcal{A}_m$, we have $\langle x_{i_1},x_{i_2}\rangle\geqslant(1-\frac{1}{100})|x_{i_1}||x_{i_2}|$.
		Let $M=\max(1000, 2+q)$, $I_m=\{i\in\mathbb{N}\mid B^p_{r_{i}}((x_{i},t_{i}))\in\mathcal{A}_m\}$, and
		\begin{equation}
			\mathcal{J}=\{i\in\mathbb{N}\mid\ i<j,\ \ r_i\leqslant M\ \text{and }\ B^p_{r_i}((x_i,t_i))\cap B^p_{1}((0,0))\neq\emptyset\},
		\end{equation}
		\begin{equation}
			\mathcal{C}_{l,m,+}=\{i\in I_m\mid i<j,\ t_i\geqslant 10,\ |x_i|\geqslant 10,\ M^l\leqslant r_i<M^{l+1}\ \text{and }\ B^p_{r_i}((x_i,t_i))\cap B^p_{1}((0,0))\neq\emptyset\},
		\end{equation}
		\begin{equation}
			\mathcal{C}_{l,m,-}=\{i\in I_m\mid i<j,\ t_i\leqslant -10,\ |x_i|\geqslant 10,\ M^l\leqslant r_i<M^{l+1}\ \text{and }\ B^p_{r_i}((x_i,t_i))\cap B^p_{1}((0,0))\neq\emptyset\},
		\end{equation}
		\begin{equation}
			\mathcal{K}_{l,m}=\{i\in I_m\mid i<j,\ |t_i|\leqslant 10,\ M^l\leqslant r_i<M^{l+1}\ \text{and }\ B^p_{r_i}((x_i,t_i))\cap B^p_{1}((0,0))\neq\emptyset\},
		\end{equation}
		\begin{equation}
			\mathcal{P}_{l,+}=\{i\in\mathbb{N}\mid i<j,\ t_i\geqslant 500,\ |x_i|\leqslant 10,\ M^l\leqslant r_i<M^{l+1}\ \text{and }\ B^p_{r_i}((x_i,t_i))\cap B^p_{1}((0,0))\neq\emptyset\},
		\end{equation}
		\begin{equation}
			\mathcal{P}_{l,-}=\{i\in\mathbb{N}\mid i<j,\ t_i\leqslant -500,\ |x_i|\leqslant 10,\ M^l\leqslant r_i<M^{l+1}\ \text{and }\ B^p_{r_i}((x_i,t_i))\cap B^p_{1}((0,0))\neq\emptyset\}.
		\end{equation}
		By Proposition~\ref{coes}, $\#\mathcal{J}\leqslant 2K(n,q,M)$.
		Let $\mathcal{D}=\{B^p_{r_{i_h}}((x_{i_h},t_{i_h}))\}$ be a sequence such that for every $h$ we have
		$i_h\in \mathcal{C}_{l_h,m,+}$, $l_h\geqslant 1$, and if $h_1\neq h_2$, then $|l_{h_1}-l_{h_2}|\geqslant 2$. By Lemma~\ref{>>} we have $\#\mathcal{D}\leqslant 8$.
		Therefore, for any $m$, we have
		\begin{equation}
			\#\big\{l\geqslant 1\mid \mathcal{C}_{l,m,+}\neq\emptyset\big\}\leqslant 16.
		\end{equation}
		Similarly,
		\begin{equation}
			\#\big\{l\geqslant 1\mid \mathcal{C}_{l,m,-}\neq\emptyset\big\}\leqslant 16.
		\end{equation}
		Also similarly by Lemma~\ref{<>}
		\begin{equation}
			\#\big\{l\geqslant 1\mid \mathcal{K}_{l,m}\neq\emptyset\big\}\leqslant 2.
		\end{equation}
		And by Lemma~\ref{><}
		\begin{equation}
			\#\big\{l\geqslant 1\mid \mathcal{P}_{l,+}\neq\emptyset\big\}\leqslant 2,
		\end{equation}
		\begin{equation}
			\#\big\{l\geqslant 1\mid \mathcal{P}_{l,-}\neq\emptyset\big\}\leqslant 2.
		\end{equation}
		It follows that
		\begin{equation}
			C(n,q)\leqslant K(n,q,\max(1000, 2+q))(34F(n)+6).
		\end{equation}
	\end{proof}
	\subsection*{Proof of Theorem~\ref{BP}.}
	\
	
	Case 1: $\sup\{r_j|\ B^p_{r_j}((x_j,t_j))\in \mathcal{B},\,(x_j,t_j)\in A\}=\infty$. Then there exists $j$ such that $A\subset B^p_{r_j}((x_j,t_j))$.
	
	Case 2: $\sup\{r_j|\ B^p_{r_j}((x_j,t_j))\in \mathcal{B},\,(x_j,t_j)\in A\}<\infty$. Fix some $q>0$. We will inductively construct the families $\mathcal{B}^j$ and balls $B^p_{r_{i_j}}((x_{i_j},t_{i_j}))$.
	Let $\mathcal{B}^0=\{B^p_r((x,t))\in\mathcal{B}|\ (x,t)\in A\}$. Suppose we have constructed $\mathcal{B}^{j}$.
		
	{\bf Case $\mathcal{B}^{j}\neq\emptyset$}. Let $B^p_{r_{i_j}}((x_{i_j},t_{i_j}))\in\mathcal{B}^j$ be such that 
	\begin{equation}
		r_{i_j}
		\geqslant
		\frac{1}{1+q}\sup\{r_i\mid B^p_{r_{i}}((x_{i},t_{i}))\in\mathcal{B}^j\},
	\end{equation}
	and define
	\begin{equation}
		\mathcal{B}^{j+1}
		=
		\{B^p_{r_{i}}((x_{i},t_{i}))\in\mathcal{B}^j\mid (x_{i},t_{i})\notin B^p_{r_{i_j}}((x_{i_j},t_{i_j}))\}.
	\end{equation}
	
	{\bf Case $\mathcal{B}^{j}=\emptyset$.} We end the process. 
	
	One may see that $\{B^p_{r_{i_j}}((x_{i_j},t_{i_j}))\}$ is a $q$-Besicovitch sequence. By Corollary~\ref{s0}, $\lim\limits_{j\to\infty} r_{i_j}=0$, or $\{B^p_{r_{i_j}}((x_{i_j},t_{i_j}))\}$ is a finite sequence. Therefore,
	\begin{equation}
		A\subset \bigcup_{j}B^p_{r_{i_j}}((x_{i_j},t_{i_j})).
	\end{equation}
	Corollary~\ref{end} completes the proof of Theorem~\ref{BP}.
	
	\section{Proof of Theorem~\ref{neBP}}
	We will prove that for $1\leqslant p<2$ there exists an infinite sequence of balls $\{B^p_{r_j}((x_j,t_j))\}$ such that $(0,0)\in B^p_{r_j}((x_j,t_j))$ and if $i\neq j$, then $(x_i,t_i)\notin B^p_{r_j}((x_j,t_j))$.
	
	Let $\{t_j\}_{j=1}^{\infty}$ be a rapidly growing infinite sequence such that $t_1\geqslant 1$, $t_j<t_{j+1}$, and
	\begin{equation}\label{ys}
		\Big(\Big(\frac{p}{2}(t_j+1)^{\frac{p}{2}-1}\Big)^{\frac{1}{p}}-\Big(\frac{p}{2}(t_{j+1}+1)^{\frac{p}{2}-1}\Big)^{\frac{1}{p}}\Big)^p
		\geqslant
		\frac{t_j+1}{(t_{j+1}+1)^{\frac{2-p}{2}}}.
	\end{equation}
	Let $r_j=(t_j+1)^{\frac{1}{2}}$ and $|x_j|^p=\frac{p}{2}(t_j+1)^{\frac{p}{2}-1}$.
	First, we prove that $(0,0)\in B^p_{r_j}((x_j,t_j))$:
	\begin{equation}
		\begin{aligned}
			d_p((0,0),(x_j,t_j))^p
			&=
			|x_j|^p+t_j^{\frac{p}{2}}
			=
			t_j^{\frac{p}{2}}+\frac{p}{2}(t_j+1)^{\frac{p}{2}-1}\\
			&=
			(t_j+1)^{\frac{p}{2}}\Big(\Big(1-\frac{1}{t_j+1}\Big)^{\frac{p}{2}}+\frac{p}{2}(t_j+1)^{-1}\Big)
			\leqslant
			(t_j+1)^{\frac{p}{2}}
			=r_j^p.
		\end{aligned}
	\end{equation}
	Second, we prove that if $j\neq i$, then $(x_i,t_i)\notin B^p_{r_j}((x_j,t_j))$. It suffices to consider the case $i<j$, because $r_i<r_j$.
	\begin{equation}
		\begin{aligned}
			d_p((x_i,t_i),(x_j,t_j))^p
			&=
			|x_i-x_j|^p+(t_j-t_i)^{\frac{p}{2}}
			=
			|x_i-x_j|^p+(t_j+1)^{\frac{p}{2}}\Big(1-\frac{t_i+1}{t_j+1}\Big)^{\frac{p}{2}}\\
			&>
			|x_i-x_j|^p+(t_j+1)^{\frac{p}{2}}\Big(1-\frac{t_i+1}{t_j+1}\Big)
			=
			r_j^p+|x_i-x_j|^p-\frac{t_i+1}{(t_j+1)^{\frac{2-p}{2}}}
			\overset{\scriptscriptstyle{\eqref{ys}}}{\geqslant}
			r_j^p.
		\end{aligned}
	\end{equation}
%
	
	\
	
	\
	
	\
	
	{\small
		
		St. Petersburg State University Department of Mathematics and Computer Sciences.\\
		e-mail: n.dobronravov@spbu.ru\\
		\bigskip
		
	}
	
\end{document}